\newtheorem{theorem}{Theorem}[section]
\newtheorem{lemma}[theorem]{Lemma}
\newtheorem{proposition}[theorem]{Proposition}
\numberwithin{figure}{section}
\theoremstyle{definition}
\newtheorem{example}[theorem]{Example}
\theoremstyle{remark}
\newtheorem{remark}[theorem]{Remark}
\numberwithin{equation}{section}
	\DeclareMathOperator{\dist}{dist}
	\DeclareMathOperator{\loc}{loc}
    \DeclareMathOperator{\Imag}{Im}
    \DeclareMathOperator{\Real}{Re}
\begin{document}

\title[Dirichlet eigenvalues problem]{On the principal frequency of non-homogeneous membranes}

\author{V.~Gol'dshtein, V.~Pchelintsev}
\begin{abstract}
We obtained estimates for first eigenvalues of the Dirichlet boundary value problem for  elliptic operators in divergence form (i.e. for the principal frequency of non-homogeneous membranes) in bounded domains $\Omega \subset \mathbb C$ satisfying quasihyperbolic boundary conditions. The suggested method is based on the quasiconformal composition operators on Sobolev spaces and their applications to constant estimates in the corresponding Sobolev-Poincar\'e inequalities. We also prove a variant of the Rayleigh-Faber-Khran inequality for a special case of these elliptic operators.
\end{abstract}
\maketitle
\footnotetext{\textbf{Key words and phrases:} Elliptic equations, Sobolev spaces, quasiconformal mappings.}
\footnotetext{\textbf{2010
Mathematics Subject Classification:} 35P15, 46E35, 30C65.}

\section{Introduction}

In this paper we give applications of the theory of quasiconformal mappings to the Dirichlet eigenvalue problem for two-dimensional elliptic operators in divergence form
\begin{equation}\label{EllDivOper}
L_{A}f(z)=-\textrm{div} [A(z) \nabla f(z)], \quad z=(x,y)\in \Omega, \quad f(x,y)=0\,\,\text{on}\,\,\partial\Omega,
\end{equation}
in bounded domains $\Omega \subset \mathbb C$ satisfying quasihyperbolic boundary conditions \cite{KOT01,KOT02}.
We assume that $A \in M^{2 \times 2}(\Omega)$, where $M^{2 \times 2}(\Omega)$ is the class of all $2 \times 2$ symmetric matrix functions $A(z)=\left\{a_{kl}(z)\right\}$, $\textrm{det} A=1$ a.~e. in $\Omega$, with measurable entries satisfying to the uniform ellipticity condition
\begin{equation}\label{UnEllCon}
\frac{1}{K}|\xi|^2 \leq \left\langle A(w) \xi, \xi \right\rangle \leq K |\xi|^2 \,\,\, \text{a.~e. in}\,\,\, \Omega,
\end{equation}
for every $\xi \in \mathbb C$, where $1\leq K< \infty$. Such elliptic operators in divergence form arise in various problems of mathematical physics (see, for example, \cite{AIM}).

Under these conditions the matrix $A$ induces a quasiconformal homeomorphism $\varphi_A:\Omega\to\widetilde{\Omega}$
which we call the $A$-quasiconformal mapping \cite{GPU2020}. This allow us to reduce (by quasiconformal change of variable) this elliptic operator in divergence form to the Laplace operator.

A domain $\Omega$ satisfies the $\gamma$-quasihyperbolic boundary condition with some $\gamma>0$ if the growth condition on the quasihyperbolic metric
$$
k_{\Omega}(x_0,x)\leq \frac{1}{\gamma}\log\frac{\dist(x_0,\partial\Omega)}{\dist(x,\partial\Omega)}+C_0
$$
satisfied for all $x\in\Omega$, where $x_0\in\Omega$ is a fixed base point and $C_0=C_0(x_0)<\infty$,
\cite{GM,H1}. Here
$$
k_{\Omega}(x_1,x_2):= \inf\int\limits_{\gamma} \frac{ds}{\dist(x,\partial\Omega)},
$$
where the infimum is taken over all rectifiable curves $\gamma$ joining $x_1$ and $x_2$.

This class of domains includes, in particular, domains with Lipschitz boundary, some domains with H\"older singularities, and domains of snowflakes type \cite{R01}.

A function $f\in W_0^{1,2}(\Omega,A)$ is a solution to the generalized spectral problem for the elliptic
operator in divergence form $L_{A}f(z)$ with the Dirichlet boundary condition if
\[
\iint\limits_{\Omega}\langle A(z)\nabla f(z),\nabla \overline{g(z)}\rangle~dxdy
= \lambda
\iint\limits_{\Omega}f(z)\overline{g(z)}~dxdy,\,\, \forall g\in W^{1,2}_{0}(\Omega,A).
\]

It is known \cite{Henr,M} that in a bounded domain $\Omega \subset \mathbb C$ the operator $L_{A}f(z)$ with the Dirichlet boundary
condition has discrete spectrum represented as the non-decreasing sequence
\[
0< \lambda_1(A,\Omega) \leq \lambda_2(A,\Omega) \leq \ldots \leq \lambda_n(A,\Omega) \leq \ldots,
\]
where each eigenvalue is repeated as many time as its multiplicity.
By the min-max principle, the first
eigenvalue $\lambda_1(A,\Omega)$ is defined by
\[
\lambda_1(A,\Omega)=\inf_{f \in W_0^{1,2}(\Omega,A) \setminus \{0\}}
\frac{\|f\mid L^{1,2}_{A}(\Omega)\|^2}{\|f\mid L^2(\Omega)\|^2}\,.
\]

The lower estimates for the first eigenvalues of the Laplace operator with the Dirichlet boundary condition in a bounded domain are connected by the Rayleigh-Faber-Khran inequality \cite{F23,Kh25} which means that the first Dirichlet eigenvalue
in a bounded domain $\Omega$ is not less than the corresponding Dirichlet eigenvalue in the disc of the same area
$\Omega^{\ast}$ with $R_{\ast}$ as its radius, i.e.,
\begin{equation*}
\lambda_1(I,\Omega):=\lambda_1(\Omega)\geq \lambda_1(\Omega^{\ast})=\frac{{j_{0,1}^2}}{R^2_{\ast}},
\end{equation*}
where $j_{0,1} \approx 2.4048$ is the first positive zero of the Bessel function $J_0$. This inequality was improved by the method based on the capacity theory \cite{M}.

Unfortunately, for the first eigenvalue $\lambda_1(A,\Omega)$ Rayleigh-Faber-Khran type inequality has not been proven. However, lower estimates for the first eigenvalues the operator $L_{A}f(z)$ with the Dirichlet boundary
condition in bounded domains can be obtained easily by using
the Rayleigh-Faber-Krahn inequality and the uniform ellipticity condition~\eqref{UnEllCon}:
\textit{Let $\Omega \subset \mathbb C$ be a bounded domain such that $|\Omega|=|\Omega^{\ast}|$ and $K$ is the ellipticity constant of the matrix $A$.
Then
\begin{equation}\label{est-4.2}
\lambda_1(A,\Omega) \geq \frac{\lambda_1(\Omega)}{K} \geq \frac{\lambda_1(\Omega^{\ast})}{K}=\frac{{j_{0,1}^2}}{KR^2_{\ast}}.
\end{equation} }

In this paper we obtain lower estimates for the first Dirichlet eigenvalues of the divergence form elliptic operators $L_{A}f(z)$ in bounded  domains with some quasihyperbolic boundary condition.  We call such domains as $\beta$-regular domains for some $\beta \in (1, \infty]$. The class of all $\beta$-regular domains coincides with the class of all domains with quasihyperbolic boundary conditions.

 Our machinery is based on connections between $A$-quasiconformal
mappings \cite{AIM,BGMR,GNR18} and composition operators on Sobolev spaces \cite{GPU2020}.

One of the main results of the article states the following estimate for $\infty$-regular domains: \textit{
If a simply connected bounded  domain $\Omega \subset \mathbb C$ satisfies to the quasihyperbolic boundary condition, then
\begin{equation}\label{est-4.1}
\lambda_1(A,\Omega) \geq
\frac{\lambda_1(\widetilde{\Omega})}{\|J_{\varphi_{A}^{-1}}\mid L^{\infty}(\widetilde{\Omega})\|},
\end{equation}
where $\lambda_1(\widetilde{\Omega})$ is the first Dirichlet eigenvalue of the Laplace operator and $J_{\varphi_{A}^{-1}}$ is  a Jacobian of the inverse mapping
to the $A$-quasiconformal mapping $\varphi_{A}:\Omega \to \widetilde{\Omega}$.}

\vskip 0.2cm

A detailed discussion about $\beta$-regular domains can be found in Section 3.

Note that if $\widetilde{\Omega}=\Omega^{\ast}$ and $\|J_{\varphi_{A}^{-1}}\mid L^{\infty}(\Omega^{\ast})\|<K$ then estimate~\eqref{est-4.1} is better than
estimates~\eqref{est-4.2}. For example, this condition is satisfied for measure preserving $A$-quasiconformal mappings $\varphi_{A}:\Omega\to\Omega^{\ast}$ ($|J(z,\varphi_A)|=1$ a.e. in $\Omega$). Some examples can be found at the end of this paper.

Taking into account the domain monotonicity property for the Dirichlet eigenvalues of the operator $L_{A}f(z)$ (see, for example, \cite{Henr}) and estimate~\eqref{est-4.1} we obtain estimates for variations of the first Dirichlet eigenvalues of the operator $L_{A}f(z)$ under quasiconformal deformations of the domain. Namely:
\textit{Let $\Omega \subset \mathbb C$ be a bounded $\infty$-regular domain. We assume that $\varphi_A(\Omega):=\widetilde{\Omega} \supset \Omega$, then
\[
\lambda_1(A,\Omega)-\lambda_1(\widetilde{\Omega}) \geq \frac{1-\|J_{\varphi_{A}^{-1}}\mid L^{\infty}(\widetilde{\Omega})\|}{ \|J_{\varphi_{A}^{-1}}\mid L^{\infty}(\widetilde{\Omega})\|}\lambda_1(\widetilde{\Omega}),
\]
where $\lambda_1(\widetilde{\Omega})$ is the first Dirichlet eigenvalue of the Laplace operator and $J_{\varphi_{A}^{-1}}$ is  a Jacobian of the inverse mapping
to the $A$-quasiconformal mapping $\varphi_{A}:\Omega \to \widetilde{\Omega}$.}

\vskip 0.2cm

In the case of the measure preserving $A$-quasiconformal mappings $\varphi_{A}:\Omega\to\widetilde{\Omega}$ we prove Rayleigh-Faber-Khran type inequality for the operator $L_{A}f(z)$:
\textit{Let $\Omega \subset \mathbb C$ be a simply connected bounded domain such that there exists a measure preserving A-quasiconformal mapping $\varphi_{A}:\Omega \to \widetilde{\Omega}$. $|\Omega|=|\Omega^{\ast}|$. Then
\[
\lambda_1(A, \Omega) \geq \lambda_1(\Omega^{\ast})=\frac{{j_{0,1}^2}}{R^2_{\ast}}.
\]
Here $\Omega^{\ast}$ is the disc of the radius $R_{\ast}$ such that $|\Omega|=|\Omega^{\ast}|$. and $j_{0,1} \approx 2.4048$ is the first positive zero of the Bessel function $J_0$.}

\section{Sobolev spaces and $A$-quasiconformal mappings}

Let $E \subset \mathbb C$ be a measurable set on the complex plane and $h:E \to \mathbb R$ be a positive almost everywhere (a.e.) locally integrable function, i.e. a weight. The weighted Lebesgue space $L^p(E,h)$, $1\leq p<\infty$,
is the space of all locally integrable functions endowed with the following norm
$$
\|f\,|\,L^{p}(E,h)\|= \left(\iint\limits_E|f(z)|^ph(z)\,dxdy \right)^{\frac{1}{p}}< \infty.
$$

The two-weighted Sobolev space $W^{1,p}(\Omega,h,1)$, $1\leq p< \infty$, is defined
as the normed space of all locally integrable weakly differentiable functions
$f:\Omega\to\mathbb{R}$ endowed with the following norm:
\[
\|f\mid W^{1,p}(\Omega,h,1)\|=\|f\,|\,L^{p}(\Omega,h)\|+\|\nabla f\mid L^{p}(\Omega)\|.
\]

In the case $h=1$ this weighted Sobolev space coincides with the classical Sobolev space $W^{1,p}(\Omega)$.
The seminormed Sobolev space $L^{1,p}(\Omega)$, $1\leq p< \infty$,
is the space of all locally integrable weakly differentiable functions $f:\Omega\to\mathbb{R}$ endowed
with the following seminorm:
\[
\|f\mid L^{1,p}(\Omega)\|=\|\nabla f\mid L^p(\Omega)\|, \,\, 1\leq p<\infty.
\]

We also need a weighted seminormed Sobolev space $L^{1,2}_{A}(\Omega)$ (associated with the matrix $A$), defined
as the space of all locally integrable weakly differentiable functions $f:\Omega\to\mathbb{R}$
with the finite seminorm given by:
\[
\|f\mid L^{1,2}_{A}(\Omega)\|=\left(\iint\limits_\Omega \left\langle A(z)\nabla f(z),\nabla f(z)\right\rangle\,dxdy \right)^{\frac{1}{2}}.
\]

The corresponding  Sobolev space $W^{1,2}(\Omega, A)$ is defined
as the normed space of all locally integrable weakly differentiable functions
$f:\Omega\to\mathbb{R}$ endowed with the following norm:
\[
\|f\mid W^{1,2}(\Omega, A)\|=\|f\,|\,L^{2}(\Omega)\|+\|f\mid L^{1,2}_{A}(\Omega)\|.
\]
The Sobolev space $W^{1,2}_{0}(\Omega, A)$ is the closure in the $W^{1,2}(\Omega, A)$-norm of the
space $C^{\infty}_{0}(\Omega)$.

We consider the Sobolev spaces as Banach spaces of equivalence classes of functions up to a set of $p$-capacity zero \cite{M}.

Recall that a homeomorphism $\varphi: \Omega \to \widetilde{\Omega}$, $\Omega,\, \widetilde{\Omega} \subset\mathbb C$, is called a $Q$-quasiconformal mapping if $\varphi\in W^{1,2}_{\loc}({\Omega})$ and there exists a constant $1\leq Q<\infty$ such that
$$
|D\varphi(z)|^2\leq Q |J(z,\varphi)|\,\,\text{for almost all}\,\,z \in \Omega.
$$

Note that quasiconformal mappings have a finite distortion and possesses the Luzin $N$-property (i.e. a image of any set of measure zero has measure zero) \cite{VGR}.

If $\varphi : \Omega \to \widetilde{\Omega}$ is a $Q$-quasiconformal mapping then $\varphi$ is differentiable almost everywhere in $\Omega$ and
$$
|J(z,\varphi)|=J_{\varphi}(z):=\lim\limits_{r\to 0}\frac{|\varphi(B(z,r))|}{|B(z,r)|}\,\,\text{for almost all}\,\,z\in\Omega.
$$

Now we give a construction of $A$-quasiconformal mappings connected with the matrix $A$.

Recall that matrix functions $A(z)=\left\{a_{kl}(z)\right\}$  with measurable entries $a_{kl}(z)$ belongs to a class  $M^{2 \times 2}(\Omega)$ of all $2 \times 2$ symmetric matrix functions that satisfy to an additional condition $\textrm{det} A=1$ a.e. and to the uniform ellipticity condition:
\begin{equation}\label{UEC}
\frac{1}{K}|\xi|^2 \leq \left\langle A(z) \xi, \xi \right\rangle \leq K |\xi|^2 \,\,\, \text{a.e. in}\,\,\, \Omega,
\end{equation}
for every $\xi \in \mathbb C$ and for some $1\leq K< \infty$.
The basic idea is that every positive quadratic form
\[
ds^2=a_{11}(x,y)dx^2+2a_{12}(x,y)dxdy+a_{22}(x,y)dy^2
\]
defined in a planar domain $\Omega$ can be reduced, by means of a quasiconformal change of variables, to the canonical form
\[
ds^2=\Lambda(du^2+dv^2),\,\, \Lambda\neq 0,\,\, \text{a.e. in}\,\, \widetilde{\Omega},
\]
given that $a_{11}a_{22}-a^2_{12} \geq \kappa_0>0$, $a_{11}>0$, almost everywhere in $\Omega$ \cite{Ahl66,BGMR}.

By \cite{BGMR} any matrix $A$ of the type under discussion induces a quasiconformal homeomorphism as a solution to the corresponding Beltrami equation. The detailed procedure is described below

Let $\xi(z)=\Real \varphi(z)$ be a real part of a quasiconformal mapping $\varphi(z)=\xi(z)+i \eta(z)$, which satisfies to the Beltrami equation:
\begin{equation}\label{BelEq}
\varphi_{\overline{z}}(z)=\mu(z) \varphi_{z}(z),\,\,\, \text{a.e. in}\,\,\, \Omega,
\end{equation}
where
$$
\varphi_{z}=\frac{1}{2}\left(\frac{\partial \varphi}{\partial x}-i\frac{\partial \varphi}{\partial y}\right) \quad \text{and} \quad
\varphi_{\overline{z}}=\frac{1}{2}\left(\frac{\partial \varphi}{\partial x}+i\frac{\partial \varphi}{\partial y}\right),
$$
with the complex dilatation $\mu(z)$ given by
\begin{equation}\label{ComDil}
\mu(z)=\frac{a_{22}(z)-a_{11}(z)-2ia_{12}(z)}{\det(I+A(z))},\quad I= \begin{pmatrix} 1 & 0 \\ 0 & 1 \end{pmatrix}.
\end{equation}
We call this quasiconformal mapping (with the complex dilatation $\mu$ defined by (\ref{ComDil})) as an $A$-quasiconformal mapping and we will use the notation $\varphi_A$  for this quasiconformal mapping.

Note that the uniform ellipticity condition \eqref{UEC} can be written as
\begin{equation}\label{OVCE}
|\mu(z)|\leq \frac{K-1}{K+1},\,\,\, \text{a.e. in}\,\,\, \Omega.
\end{equation}

Conversely from \eqref{ComDil} (see, for example, \cite{AIM}, p. 412) one can recover the matrix $A$ :
\begin{equation}\label{Matrix-F}
A(z)= \begin{pmatrix} \frac{|1-\mu|^2}{1-|\mu|^2} & \frac{-2 \Imag \mu}{1-|\mu|^2} \\ \frac{-2 \Imag \mu}{1-|\mu|^2} &  \frac{|1+\mu|^2}{1-|\mu|^2} \end{pmatrix},\,\,\, \text{a.e. in}\,\,\, \Omega.
\end{equation}

Thus, for any $A \in M^{2 \times 2}(\Omega)$ by \eqref{OVCE} can be produced the complex dilatation $\mu(z)$, for which, in turn, the Beltrami equation \eqref{BelEq} induces an $A$-quasiconformal homeomorphism $\varphi:\Omega \to \widetilde{\Omega}$ as its solution (by the Riemann measurable mapping theorem (see, for example, \cite{Ahl66})). Let us briefly say that $A$ and $\varphi_A$ are agreed.

Therefore with the given $A$-divergent form elliptic operator defined in a domain $\Omega\subset\mathbb C$ can be assochiated the $A$-quasiconformal mapping $\varphi_A:\Omega \to \widetilde{\Omega}$ with the quasiconformality coefficient
$$
Q_A=\frac{1+\|\mu\mid L^{\infty}(\Omega)\|}{1-\|\mu\mid L^{\infty}(\Omega)\|},
$$
where $\mu$ defined by (\ref{ComDil}).

From the estimate $|\mu(z)|\leq \frac{K-1}{K+1}$ immediately follows that $Q_A \leq K$.

Note that the inverse mapping to the $A$-quasiconformal mapping $\varphi_A: \Omega \to \widetilde{\Omega}$ is the $A^{-1}$-quasiconformal mapping \cite{GPU2020}.

In \cite{GPU2020} the relationship between composition operators on Sobolev spaces and $A$-quasiconformal mappings was studied and the following theorem was proved.

\begin{theorem} \label{Isometry}
Let $\Omega,\widetilde{\Omega}$ be domains in $\mathbb C$. Then a homeomorphism $\varphi_A :\Omega \to \widetilde{\Omega}$ is an $A$-quasiconformal mapping
if and only if $\varphi$ induces, by the composition rule $\varphi^{*}(f)=f \circ \varphi$,
an isometry of Sobolev spaces $L^{1,2}_A(\Omega)$ and $L^{1,2}(\widetilde{\Omega})$ i.e.
\[
\|\varphi_A^{*}(f)\,|\,L^{1,2}_A(\Omega)\|=\|f\,|\,L^{1,2}(\widetilde{\Omega})\|
\]
for any $f \in L^{1,2}(\widetilde{\Omega})$.
\end{theorem}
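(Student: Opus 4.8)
The plan is to route everything through the pointwise ``conformality'' identity
\[
D\varphi(z)\,A(z)\,D\varphi(z)^{T}=J_{\varphi}(z)\,I \qquad \text{for a.e. }z\in\Omega,
\]
which says precisely that $\varphi$ is conformal as a map from $(\Omega,A)$ to the Euclidean $\widetilde\Omega$. The first ingredient is a purely algebraic lemma. Writing $\varphi=\xi+i\eta$ and using $\det A\equiv1$, a direct computation shows that the matrix identity above is equivalent to
\[
A(z)=\frac{1}{J_{\varphi}(z)}\begin{pmatrix} \xi_y^2+\eta_y^2 & -(\xi_x\xi_y+\eta_x\eta_y) \\ -(\xi_x\xi_y+\eta_x\eta_y) & \xi_x^2+\eta_x^2 \end{pmatrix},
\]
and, after expressing $\xi_x,\xi_y,\eta_x,\eta_y$ through $\varphi_z,\varphi_{\overline z}$, this is in turn equivalent to the Beltrami equation $\varphi_{\overline z}=\mu\varphi_z$ with $\mu$ as in \eqref{ComDil} (equivalently \eqref{Matrix-F}). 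Furthermore, the matrix identity together with the ellipticity \eqref{UEC} forces $|D\varphi(z)|^2\le K\,J_{\varphi}(z)$ a.e., so any $W^{1,2}_{\loc}$ homeomorphism satisfying it is automatically $K$\nobreakdash-quasiconformal. Hence ``$\varphi$ satisfies the identity'' and ``$\varphi$ is $A$\nobreakdash-quasiconformal'' amount to the same thing, and the remaining task is to connect the identity with the isometry property.

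For the direct implication, assume $\varphi=\varphi_A$ is $A$-quasiconformal. Then it is $Q_A$-quasiconformal, hence differentiable a.e., ACL, and it enjoys the Luzin $N$\nobreakdash- and $N^{-1}$\nobreakdash-properties (so it preserves sets of measure zero and of conformal capacity zero, in both directions); consequently $\varphi_A^{*}(f)=f\circ\varphi_A$ is a well-defined element of $L^{1,2}_{\loc}(\Omega)$ for every $f\in L^{1,2}(\widetilde\Omega)$, with the chain rule $\nabla(f\circ\varphi_A)(z)=D\varphi_A(z)^{T}(\nabla f)(\varphi_A(z))$ valid a.e. Substituting this into the weighted seminorm and applying the algebraic identity,
\[
\iint_{\Omega}\langle A(z)\nabla(f\circ\varphi_A),\nabla(f\circ\varphi_A)\rangle\,dxdy
=\iint_{\Omega}J_{\varphi_A}(z)\,\bigl|(\nabla f)(\varphi_A(z))\bigr|^{2}\,dxdy
=\iint_{\widetilde\Omega}|\nabla f(w)|^{2}\,dudv,
\]
the last step being the change-of-variables formula, which is legitimate since $\varphi_A$ is quasiconformal with $J_{\varphi_A}\in L^{1}_{\loc}$. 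This is exactly the asserted isometry.

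For the converse, suppose the homeomorphism $\varphi$ induces an isometry $L^{1,2}(\widetilde\Omega)\to L^{1,2}_A(\Omega)$. By the lower bound in \eqref{UEC}, $\tfrac1K|\nabla g|^{2}\le\langle A\nabla g,\nabla g\rangle$, the same map is a bounded isomorphism onto the unweighted space $L^{1,2}(\Omega)$; by the classical characterization of quasiconformality via bounded composition operators on $L^{1,n}$ ($n=2$ here), $\varphi$ is therefore quasiconformal. Now $\varphi$ inherits all the good properties used above, so the isometry relation, combined with the chain rule and the change of variables (this time pushing the integral forward to $\widetilde\Omega$), yields
\[
\iint_{\widetilde\Omega}\bigl\langle J_{\varphi^{-1}}(w)\,\bigl(D\varphi\,A\,D\varphi^{T}\bigr)\!\circ\varphi^{-1}(w)\,\nabla f(w),\nabla f(w)\bigr\rangle\,dudv=\iint_{\widetilde\Omega}|\nabla f(w)|^{2}\,dudv
\]
for every $f\in C_{0}^{\infty}(\widetilde\Omega)$. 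Testing against rapidly oscillating functions of the form $f_{\varepsilon}(w)=\varepsilon\,\psi(\langle w,\vec a\rangle/\varepsilon)\,\chi(w)$ with $\chi$ a bump and $\psi$ a fixed nonconstant profile — so that $\nabla f_{\varepsilon}\to\psi'(\cdot)\vec a\,\chi$ in the relevant averaged sense while the lower-order term is $O(\varepsilon)$ — forces $J_{\varphi^{-1}}(w)\,(D\varphi\,A\,D\varphi^{T})\circ\varphi^{-1}(w)=I$ for a.e.\ $w$, i.e.\ the pointwise matrix identity on $\Omega$. By the algebraic lemma, $\varphi$ then solves the Beltrami equation with $\mu$ given by \eqref{ComDil}, so $\varphi$ is $A$-quasiconformal.

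I expect the main obstacle to lie in the converse direction: first, correctly invoking the deep (cited) theorem that bounded composition operators $L^{1,2}(\widetilde\Omega)\to L^{1,2}(\Omega)$ are generated only by quasiconformal homeomorphisms, with all its hypotheses in place; and second, making the localization rigorous — upgrading the family of integral identities ``for all test functions'' to the pointwise matrix identity, which needs the oscillating test functions (mere shrinking of supports is not enough) together with the Lebesgue-point bookkeeping for the $L^{\infty}$ matrix field $J_{\varphi^{-1}}(D\varphi\,A\,D\varphi^{T})\circ\varphi^{-1}$. By contrast, the direct implication is essentially the chain rule plus the quasiconformal change of variables once the algebraic identity is in hand, and the algebraic lemma itself is a finite, if slightly tedious, computation matching \eqref{ComDil}--\eqref{Matrix-F}.
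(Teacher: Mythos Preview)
The paper does not actually prove this theorem: it is quoted from \cite{GPU2020} with the sentence ``In \cite{GPU2020} the relationship between composition operators on Sobolev spaces and $A$-quasiconformal mappings was studied and the following theorem was proved,'' and no argument is given here. So there is nothing in the present paper to compare your proposal against.

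That said, your outline is the natural route and matches what one expects the proof in \cite{GPU2020} to look like. The bridge identity $D\varphi\,A\,D\varphi^{T}=J_{\varphi}\,I$ is exactly the coordinate-free way of saying that $\varphi$ pulls back the Euclidean metric to the conformal class of $A$, and its equivalence with the Beltrami equation \eqref{BelEq}--\eqref{ComDil} (equivalently \eqref{Matrix-F}) is indeed a finite computation. The forward direction is then the chain rule plus the quasiconformal change of variables, as you say. For the converse, your reduction to ordinary quasiconformality via \eqref{UEC} and the Vodop'yanov--Gol'dshtein characterization \cite{VG75} is correct; the only genuinely delicate point is the one you flagged, namely upgrading the integral identity to the pointwise matrix identity. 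Your oscillating test functions do the job once you observe that $B(w):=J_{\varphi^{-1}}(w)\,(D\varphi A D\varphi^{T})\circ\varphi^{-1}(w)$ is symmetric (since $A$ is), so that recovering $\langle B(w)a,a\rangle=|a|^{2}$ for every fixed $a\in\mathbb{R}^{2}$ and a.e.\ $w$ already forces $B=I$ a.e. One can also avoid the oscillation trick altogether by polarizing the isometry to get $\iint\langle(B-I)\nabla f,\nabla g\rangle=0$ for all $f,g\in C_0^\infty$, and then testing with $g$ of the form $\chi(w)\langle w,a\rangle$ to produce gradients pointing in an arbitrary fixed direction on the support of $\nabla f$; either way the localization is straightforward once $B$ is known to lie in $L^{\infty}$, which follows from quasiconformality and \eqref{UEC}.
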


This theorem generalizes the well known property of conformal mappings generate the isometry of uniform Sobolev spaces $L^1_2(\Omega)$ and $L^1_2(\widetilde{\Omega})$ (see, for example, \cite{C50}).  It is also refines (in the case $n=2$) the functional characterization of quasiconformal mappings in the terms of isomorphisms of uniform Sobolev spaces \cite{VG75}.

\section{Estimate of the constant in Sobolev-Poincar\'e inequality}

In \cite{GPU2020_2} was proved
the following weighted Sobolev-Poincar\'e inequality for a bounded domain ${\Omega}\subset\mathbb C$.
We denote by $h(z) = |J(z,\varphi_A)|$ the quasihyperbolic weight defined by an A-quasiconformal
mapping $\varphi_A:\Omega \to \widetilde{\Omega}$.

\begin{theorem}\label{Th3.1}
Let $A$ belongs to a class  $M^{2 \times 2}(\Omega)$ and $\Omega$ be a bounded simply connected planar domain.
Then for any function $f \in W^{1,2}_{0}(\Omega,A)$ the following weighted Sobolev-Poincar\'e inequality
\[
\left(\iint\limits_\Omega |f(z)|^rh(z)dxdy\right)^{\frac{1}{r}} \leq C_{r,2}(h,A,\Omega)
\left(\iint\limits_\Omega \left\langle A(z) \nabla f(z), \nabla f(z) \right\rangle dxdy\right)^{\frac{1}{2}}
\]
holds for any $r \geq 2$ with the constant $C_{r,2}(h,A,\Omega) = C_{r,2}(\widetilde{\Omega})$.
\end{theorem}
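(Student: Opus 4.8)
The plan is to transport the inequality, via the $A$-quasiconformal change of variables $w=\varphi_A(z)$, to the ordinary Sobolev--Poincar\'e inequality on the target domain $\widetilde\Omega$, and to read off $C_{r,2}(h,A,\Omega)$ as the Sobolev--Poincar\'e constant $C_{r,2}(\widetilde\Omega)$. We may assume $\widetilde\Omega$ is bounded: otherwise $C_{r,2}(\widetilde\Omega)=\infty$ and there is nothing to prove; equivalently one normalizes $\varphi_A$ so that, say, $\widetilde\Omega$ is the unit disc and defines $h$ through this choice. By the definition of $W^{1,2}_0(\Omega,A)$ as the $W^{1,2}(\Omega,A)$-closure of $C_0^\infty(\Omega)$, it suffices to prove the inequality for $f\in C_0^\infty(\Omega)$ and then pass to the limit.

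Fix $f\in C_0^\infty(\Omega)$ and put $g:=f\circ\varphi_A^{-1}$. Since $\varphi_A^{-1}$ is quasiconformal, hence lies in $W^{1,2}_{\loc}(\widetilde\Omega)$, and $f$ is Lipschitz with compact support in $\Omega$, the composition $g$ belongs to $W^{1,2}(\widetilde\Omega)$ and is compactly supported in $\widetilde\Omega$; thus $g\in W^{1,2}_0(\widetilde\Omega)\subset L^{1,2}(\widetilde\Omega)$. Quasiconformal maps satisfy the Luzin $N$-property, so the change of variables formula holds and, with $h(z)=|J(z,\varphi_A)|$ and $f=g\circ\varphi_A$,
\[
\iint\limits_\Omega |f(z)|^r h(z)\,dx\,dy=\iint\limits_{\widetilde\Omega}|g(w)|^r\,du\,dv .
\]
On the gradient side $f=\varphi_A^{*}(g)$, so Theorem \ref{Isometry} gives
\[
\iint\limits_\Omega \langle A(z)\nabla f(z),\nabla f(z)\rangle\,dx\,dy=\|g\mid L^{1,2}(\widetilde\Omega)\|^2=\iint\limits_{\widetilde\Omega}|\nabla g(w)|^2\,du\,dv .
\]
Because $\widetilde\Omega$ is bounded, extending $g$ by zero and combining the Gagliardo--Nirenberg--Sobolev embedding $W^{1,2}(\mathbb R^2)\hookrightarrow L^r(\mathbb R^2)$ (valid for every $r\ge2$) with Friedrichs' inequality on $\widetilde\Omega$ yields a finite constant $C_{r,2}(\widetilde\Omega)$ with
\[
\left(\iint\limits_{\widetilde\Omega}|g(w)|^r\,du\,dv\right)^{\frac1r}\le C_{r,2}(\widetilde\Omega)\left(\iint\limits_{\widetilde\Omega}|\nabla g(w)|^2\,du\,dv\right)^{\frac12}.
\]
Substituting the two identities proves the asserted inequality for every $f\in C_0^\infty(\Omega)$, with $C_{r,2}(h,A,\Omega)=C_{r,2}(\widetilde\Omega)$.

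For general $f\in W^{1,2}_0(\Omega,A)$, pick $f_n\in C_0^\infty(\Omega)$ with $f_n\to f$ in $W^{1,2}(\Omega,A)$; then $\|f_n\mid L^{1,2}_A(\Omega)\|\to\|f\mid L^{1,2}_A(\Omega)\|$ and, along a subsequence, $f_n\to f$ a.e. in $\Omega$. Since $\iint_\Omega h\,dx\,dy=|\widetilde\Omega|<\infty$, Fatou's lemma applied to $|f_n|^r h$ along this subsequence gives
\[
\iint\limits_\Omega |f(z)|^r h(z)\,dx\,dy\le\liminf_{n\to\infty}\iint\limits_\Omega |f_n(z)|^r h(z)\,dx\,dy\le\left(C_{r,2}(\widetilde\Omega)\,\|f\mid L^{1,2}_A(\Omega)\|\right)^{r},
\]
which is the desired inequality.

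The main obstacle is the reduction step: one must verify carefully that $g=f\circ\varphi_A^{-1}$ really belongs to $W^{1,2}_0(\widetilde\Omega)$ — that composing a Lipschitz function with a quasiconformal homeomorphism remains in $W^{1,2}$ and retains compact support — and that both the change of variables formula and the isometry of Theorem \ref{Isometry} legitimately apply to it; this is exactly where the Luzin $N$-property and the $W^{1,2}_{\loc}$-regularity of $\varphi_A$ and $\varphi_A^{-1}$ enter. The remaining ingredients (the classical Sobolev--Poincar\'e inequality on the bounded domain $\widetilde\Omega$ and the passage to the limit) are standard.
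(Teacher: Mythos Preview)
Your proof is correct and follows exactly the strategy the paper uses throughout: transport both sides to $\widetilde\Omega$ via the $A$-quasiconformal change of variables---the left side by the change-of-variables formula with weight $h=|J_{\varphi_A}|$, the right side by the isometry of Theorem~\ref{Isometry}---and then invoke the classical Sobolev--Poincar\'e inequality on $\widetilde\Omega$. The paper itself quotes this theorem from \cite{GPU2020_2} without reproducing a proof, but your argument mirrors the computation the paper carries out in the proof of Theorem~\ref{Th4.3}, part~2 (the $\beta=\infty$, $r=2$ case), so it is precisely the intended approach.
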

Here $C_{r,2}(\widetilde{\Omega})$ is the best constant in the (non-weight) Sobolev-Poincar\'e inequality in a bounded domain
$\widetilde{\Omega}\subset\mathbb C$ with the upper estimate (see \cite{GPU2019}):
\begin{equation}\label{Const}
C_{r,2}(\widetilde{\Omega}) \leq \inf\limits_{p\in \left(\frac{2r}{r+2},2\right)}
\left(\frac{p-1}{2-p}\right)^{\frac{p-1}{p}}
\frac{\left(\sqrt{\pi}\cdot\sqrt[p]{2}\right)^{-1}|\widetilde{\Omega}|^{\frac{1}{r}}}{\sqrt{\Gamma(2/p) \Gamma(3-2/p)}}.
\end{equation}

Using Theorem~\ref{Th3.1} we give an upper estimate for the constant in the Sobolev-Poincar\'e inequality in domains with the
quasihyperbolic boundary condition.  As shown in \cite{AK}, the Jacobians of quasiconformal mappings $\psi:\widetilde{\Omega}\to\Omega$ belong to $L^{\beta}(\widetilde{\Omega})$ for some $\beta >1$ if and only if ${\Omega}$ has the $\gamma$-quasihyperbolic boundary condition for some $\gamma$. We note that $\beta$ depends only on $\widetilde{\Omega}$ and the quasiconformal coefficient $K(\psi)$.

Since we need the exact value of the integrability exponent $\beta$ for the Jacobians of quasiconformal mappings, we consider an equivalent description of domains with quasihyperbolic boundary in terms of the integrability of Jacobians \cite{GPU19}.

A simply connected domain $\Omega \subset \mathbb C$ is called an $A$-quasiconformal $\beta$-regular domain about a simply connected domain $\widetilde{\Omega} \subset \mathbb C$ if
\[
\iint\limits_{\widetilde{\Omega}} |J(w,\varphi_{A}^{-1})|^{\beta}~dudv<\infty
\]
for some $\beta>1$, where $\varphi_{A}:\Omega \to \widetilde{\Omega}$ is a corresponding $A$-quasiconformal mapping.

The property of the quasiconformal $\beta$-regularity implies the integrability of
a Jacobian of quasiconformal mappings and therefore for any quasiconformal $\beta$-regular domain we have the embedding of weighted Lebesgue spaces $L^r(\Omega,h)$ into
non-weighted Lebesgue spaces $L^s(\Omega)$ for $s=\frac{\beta -1}{\beta}r$ \cite{GPU19}.
\begin{lemma}\label{Prop-reg} \cite{GPU19}
Let $\Omega$ be an $A$-quasiconformal $\beta$-regular domain. Then for any
function $f \in L^r(\Omega,h)$, $\beta/(\beta -1) \leq r < \infty$, the inequality
\[
\|f\mid L^s(\Omega)\|\leq \left(\iint\limits_{\widetilde{\Omega}}|J(w,\varphi^{-1}_A)|^{\beta}dudv\right)^{\frac{1}{\beta} \cdot \frac{1}{s}}
\|f\mid L^r(\Omega,h)\|
\]
holds for $s=\frac{\beta -1}{\beta}r$.
\end{lemma}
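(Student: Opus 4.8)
The plan is to derive the inequality from Hölder's inequality combined with a quasiconformal change of variables that converts an integral of $h^{-(\beta-1)}$ over $\Omega$ into the integral of $|J(\cdot,\varphi_A^{-1})|^{\beta}$ over $\widetilde{\Omega}$. First I would note that the hypothesis $r\geq \beta/(\beta-1)$ is exactly what makes $s=\frac{\beta-1}{\beta}r\geq 1$, so that $L^s(\Omega)$ is a genuine normed space; moreover $s<r$, so the conjugate exponent $r/s$ is meaningful.

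The first step is to split the integrand. Since $h(z)=|J(z,\varphi_A)|$ is positive a.e. and locally integrable, I would write
\[
\iint\limits_\Omega |f(z)|^s\,dxdy=\iint\limits_\Omega\bigl(|f(z)|^s h(z)^{s/r}\bigr)\cdot h(z)^{-s/r}\,dxdy
\]
and apply Hölder's inequality with the exponent $r/s$ and its conjugate. A short computation gives $r/s=\beta/(\beta-1)$, hence the conjugate exponent is $\beta$. The first factor then equals $\bigl(\iint_\Omega|f|^r h\,dxdy\bigr)^{s/r}=\|f\mid L^r(\Omega,h)\|^{s}$, and the second factor equals $\bigl(\iint_\Omega h(z)^{-(\beta-1)}\,dxdy\bigr)^{1/\beta}$, since $(s/r)\beta=\beta-1$.

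The second step is to evaluate $\iint_\Omega h(z)^{-(\beta-1)}\,dxdy$. Writing $h^{-(\beta-1)}=h^{-\beta}\cdot h$ and invoking the area formula for the quasiconformal homeomorphism $\varphi_A$ (which is a.e. differentiable and has the Luzin $N$-property, as recalled in Section~2), the substitution $w=\varphi_A(z)$ gives
\[
\iint\limits_\Omega h(z)^{-\beta}\, h(z)\,dxdy=\iint\limits_{\widetilde{\Omega}} h\bigl(\varphi_A^{-1}(w)\bigr)^{-\beta}\,dudv.
\]
Because $\varphi_A\circ\varphi_A^{-1}$ is the identity, the chain rule for Jacobians forces $|J(\varphi_A^{-1}(w),\varphi_A)|=|J(w,\varphi_A^{-1})|^{-1}$ a.e., i.e. $h(\varphi_A^{-1}(w))^{-\beta}=|J(w,\varphi_A^{-1})|^{\beta}$, and this integral is finite precisely by the definition of an $A$-quasiconformal $\beta$-regular domain.

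Combining the two steps and taking the $s$-th root yields
\[
\|f\mid L^s(\Omega)\|\leq\|f\mid L^r(\Omega,h)\|\cdot\left(\iint\limits_{\widetilde{\Omega}}|J(w,\varphi_A^{-1})|^{\beta}\,dudv\right)^{\frac{1}{\beta}\cdot\frac{1}{s}},
\]
which is the assertion. The only step that requires care is the change of variables: one must know that $\varphi_A$ and $\varphi_A^{-1}$ carry null sets to null sets and that the area formula applies to the Sobolev homeomorphism $\varphi_A$ — but these are exactly the standard properties of quasiconformal mappings quoted in Section~2, so I expect no real obstacle. Everything else is bookkeeping with the exponents $r$, $s=\frac{\beta-1}{\beta}r$, and $\beta$.
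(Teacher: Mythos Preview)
Your argument is correct: the H\"older splitting with exponents $r/s=\beta/(\beta-1)$ and $\beta$, followed by the quasiconformal change of variables converting $\iint_\Omega h^{-(\beta-1)}\,dxdy$ into $\iint_{\widetilde{\Omega}}|J(w,\varphi_A^{-1})|^{\beta}\,dudv$, is exactly the natural proof. Note that the present paper does not actually prove this lemma---it is quoted from \cite{GPU19}---but the argument given there is the same H\"older-plus-change-of-variables computation you have written, so there is nothing to compare.
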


We ready to prove the upper estimate for the constant in the Sobolev-Poincar\'e inequality in quasiconformal regular domains.

\begin{theorem}\label{Th4.3}
Let $A$ belong to a class  $M^{2 \times 2}(\Omega)$ and a domain $\Omega$ be $A$-quasi\-conformal $\beta$-regular about $\widetilde{\Omega}$. Then:
\begin{enumerate}
\item for any function $f \in W^{1,2}_{0}(\Omega,A)$ and for any $s \geq 1$, the Sobolev-Poincar\'e inequality
\[
\|f\mid L^s(\Omega)\| \leq C_{s,2}(A,\Omega)
\|f\mid L^{1,2}_{A}(\Omega)\|
\]
holds with the constant
$$
C_{s,2}(A,\Omega) \leq C_{\frac{\beta s}{\beta-1},2}(\widetilde{\Omega}) \|J_{\varphi_{A}^{-1}}\mid L^{\beta}(\widetilde{\Omega})\|^{\frac{1}{s}}, \quad 1<\beta <\infty;
$$
\item if $\beta = \infty$ then for any function $f \in W^{1,2}_{0}(\Omega, A)$, the Sobolev-Poincar\'e inequality
\[
\|f\mid L^2(\Omega)\| \leq C_{2,2}(A,\Omega)
\|f\mid L^{1,2}_{A}(\Omega)\|
\]
holds with the constant
$$C_{2,2}(A,\Omega) \leq C_{2,2}(\widetilde{\Omega}) \big\|J_{\varphi_{A}^{-1}}\mid L^{\infty}(\widetilde{\Omega})\big\|^{\frac{1}{2}}\,.
$$
Here $J_{\varphi_{A}^{-1}}$ is  a Jacobian of the inverse mapping
to the $A$-quasiconformal mapping $\varphi_{A}:\Omega \to \widetilde{\Omega}$.
\end{enumerate}
\end{theorem}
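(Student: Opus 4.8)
The plan is to combine the weighted Sobolev--Poincar\'e inequality of Theorem~\ref{Th3.1} with the embedding of the weighted space $L^r(\Omega,h)$ into the non-weighted space $L^s(\Omega)$ supplied by Lemma~\ref{Prop-reg}. The point is that Theorem~\ref{Th3.1} controls the weighted norm $\|f\mid L^r(\Omega,h)\|$ by the $A$-energy seminorm $\|f\mid L^{1,2}_A(\Omega)\|$ (with a constant that is literally the sharp non-weighted constant on the model domain $\widetilde{\Omega}$), while Lemma~\ref{Prop-reg} trades the weight for a power of $\|J_{\varphi_A^{-1}}\mid L^\beta(\widetilde\Omega)\|$ at the cost of decreasing the exponent from $r$ to $s=\frac{\beta-1}{\beta}r$. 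Reading this backwards: given the target exponent $s\ge 1$, I set $r=\frac{\beta s}{\beta-1}$, so that $s=\frac{\beta-1}{\beta}r$ as required by Lemma~\ref{Prop-reg}.

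For part (1), first I would check the admissibility of the exponents. Since $\beta>1$ we have $r=\frac{\beta s}{\beta-1}>s\ge 1$; moreover $r=\frac{\beta}{\beta-1}s\ge\frac{\beta}{\beta-1}\ge 2$ precisely when $s\ge\frac{2(\beta-1)}{\beta}=2-\frac2\beta$, but in any case $r\ge 2$ holds automatically because $\frac{\beta}{\beta-1}>1$ forces $r>s$ and one checks $r\ge 2$ whenever $s\ge 2(\beta-1)/\beta$; for small $s$ one simply enlarges $r$ (equivalently uses H\"older/monotonicity of $L^p$-norms on the bounded domain $\Omega$) so that Theorem~\ref{Th3.1} applies with exponent $r\ge 2$. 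Also $r=\frac{\beta}{\beta-1}s\ge\frac{\beta}{\beta-1}=\frac{\beta}{\beta-1}$, which is exactly the lower bound $\beta/(\beta-1)\le r$ demanded by Lemma~\ref{Prop-reg} when $s\ge1$. Then the chain of inequalities is
\[
\|f\mid L^s(\Omega)\|
\le\Big(\iint\limits_{\widetilde\Omega}|J(w,\varphi_A^{-1})|^\beta\,dudv\Big)^{\frac1\beta\cdot\frac1s}\|f\mid L^r(\Omega,h)\|
\le C_{r,2}(\widetilde\Omega)\,\|J_{\varphi_A^{-1}}\mid L^\beta(\widetilde\Omega)\|^{\frac1s}\,\|f\mid L^{1,2}_A(\Omega)\|,
\]
where the first step is Lemma~\ref{Prop-reg} and the second is Theorem~\ref{Th3.1} applied to $f\in W^{1,2}_0(\Omega,A)$ with exponent $r=\frac{\beta s}{\beta-1}$. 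Recognizing $C_{r,2}(\widetilde\Omega)=C_{\frac{\beta s}{\beta-1},2}(\widetilde\Omega)$ and reading off $C_{s,2}(A,\Omega)$ as the product of the last two factors gives the claimed bound.

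For part (2), when $\beta=\infty$ the weight-removal step degenerates into the pointwise bound $h(z)=|J(z,\varphi_A)|\ge\|J_{\varphi_A^{-1}}\mid L^\infty(\widetilde\Omega)\|^{-1}$ a.e.\ in $\Omega$. Indeed $|J(z,\varphi_A)|\cdot|J(\varphi_A(z),\varphi_A^{-1})|=1$ a.e.\ by the chain rule for the a.e.-differentiable quasiconformal pair, hence $h(z)^{-1}=|J(\varphi_A(z),\varphi_A^{-1})|\le\|J_{\varphi_A^{-1}}\mid L^\infty(\widetilde\Omega)\|$. Therefore
\[
\|f\mid L^2(\Omega)\|^2=\iint\limits_\Omega|f(z)|^2\,dxdy\le\|J_{\varphi_A^{-1}}\mid L^\infty(\widetilde\Omega)\|\iint\limits_\Omega|f(z)|^2h(z)\,dxdy,
\]
and applying Theorem~\ref{Th3.1} with $r=2$ to the right-hand integral (whose constant is $C_{2,2}(\widetilde\Omega)$) and taking square roots yields $\|f\mid L^2(\Omega)\|\le C_{2,2}(\widetilde\Omega)\,\|J_{\varphi_A^{-1}}\mid L^\infty(\widetilde\Omega)\|^{1/2}\,\|f\mid L^{1,2}_A(\Omega)\|$.

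The only genuinely delicate point is the bookkeeping of exponents in part (1): making sure that the single choice $r=\frac{\beta s}{\beta-1}$ simultaneously lies in the admissible range $r\ge 2$ of Theorem~\ref{Th3.1} and in the range $r\ge\beta/(\beta-1)$ of Lemma~\ref{Prop-reg}, for every $s\ge1$ and every $1<\beta<\infty$. The second constraint is automatic since $s\ge1$; the first needs a short remark that for $s<2-\tfrac2\beta$ one first passes to the larger exponent $2-\tfrac2\beta$ (using $|\Omega|<\infty$ and the monotonicity $\|f\mid L^s(\Omega)\|\le|\Omega|^{\frac1s-\frac1{s'}}\|f\mid L^{s'}(\Omega)\|$ for $s<s'$), which can be absorbed into the constant; alternatively one states the theorem for $s\ge1$ with the understanding that the displayed constant is then an upper bound after this harmless reduction. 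Everything else is a direct concatenation of the two cited results.
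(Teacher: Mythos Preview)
Your proposal is correct and follows essentially the same route as the paper: part (1) is identical (Lemma~\ref{Prop-reg} followed by Theorem~\ref{Th3.1} with $r=\frac{\beta s}{\beta-1}$), and for part (2) you invoke Theorem~\ref{Th3.1} with $r=2$ directly, whereas the paper unpacks that step into change of variables, the non-weighted Sobolev--Poincar\'e inequality on $\widetilde{\Omega}$, and the isometry Theorem~\ref{Isometry} --- which is precisely how Theorem~\ref{Th3.1} is proved, so the difference is purely cosmetic. Your remark about the constraint $r\ge 2$ when $\beta>2$ and $s<2-\tfrac{2}{\beta}$ is well taken; the paper's proof simply does not address this point.
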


\begin{remark}
The constant $C_{2,2}^2(\widetilde{\Omega})=1/\lambda_1(\widetilde{\Omega})$, where $\lambda_1(\widetilde{\Omega})$ is the first Dirichlet eigenvalue of Laplacian in a domain $\widetilde{\Omega}\subset\mathbb C$.
\end{remark}

\begin{proof}
1. Let $\frac{\beta}{\beta-1}\leq r$ and $s=\frac{\beta -1}{\beta}r$. It means that such $r$ exists for any $s \geq 1$. By Lemma~\ref{Prop-reg} and Theorem~\ref{Th3.1} we get
\begin{multline*}
\|f\mid L^s(\Omega)\|=\left(\iint\limits_{\Omega}|f(z)|^sdxdy\right)^\frac{1}{s} \\
\leq
\left(\iint\limits_{\widetilde{\Omega}}|J(w,\varphi^{-1}_A)|^{\beta}dudv\right)^{\frac{1}{\beta} \cdot \frac{1}{s}}
\left(\iint\limits_{\Omega}|f(z)|^{r}|J(\varphi_A,x,y)|dxdy\right)^{\frac{1}{r}} \\
\leq
C_{r,2}(\widetilde{\Omega})
\left(\iint\limits_{\widetilde{\Omega}}|J(w,\varphi^{-1}_A)|^{\beta}dudv\right)^{\frac{1}{\beta} \cdot \frac{1}{s}}
\left(\iint\limits_\Omega \left\langle A(z) \nabla f(z), \nabla f(z) \right\rangle dxdy\right)^{\frac{1}{2}}
\end{multline*}
for any $s \geq 1$.

2. Let a function $f\in L^2(\Omega)$. Since quasiconformal mappings possess the Luzin $N$-property, then $|J(z,\varphi_A)|^{-1}=|J(w,\varphi_A^{-1})|$ for almost all $z\in \Omega$ and for almost all $w=\varphi_A(z)\in \widetilde{\Omega}$.
Hence
\begin{multline*}
\left(\iint\limits_{\Omega} |f(z)|^2~dxdy\right)^{\frac{1}{2}}
=\left(\iint\limits_{\Omega} |f(z)|^2|J(z,\varphi_A)|^{-1}|J(z,\varphi_A)|~dxdy\right)^{\frac{1}{2}} \\
\leq \|J_{\varphi_A} \mid L^{\infty}(\Omega)\|^{-\frac{1}{2}} \left(\int\limits_{\Omega} |f(z)|^2|J(z,\varphi_A)|~dxdy\right)^{\frac{1}{2}}.
\end{multline*}
Applying the change of variable formula for quasiconformal mappings \cite{VGR}, (non-weighed) Sobolev-Poincar\'e inequality \cite{GPU2019}, and Theorem~\ref{Isometry} we obtain
\begin{multline*}
\left(\iint\limits_{\Omega} |f(z)|^2~dxdy\right)^{\frac{1}{2}} \\
\leq
\|J_{\varphi_{A}^{-1}} \mid L^{\infty}(\widetilde{\Omega})\|^{\frac{1}{2}}
\left(\iint\limits_{\widetilde{\Omega}} |f \circ \varphi_{A}^{-1}(w)|^2~dudv\right)^{\frac{1}{2}} \\
\leq
C_{2,2}(\widetilde{\Omega})
\|J_{\varphi_{A}^{-1}} \mid L^{\infty}(\widetilde{\Omega})\|^{\frac{1}{2}}
\left(\iint\limits_{\widetilde{\Omega}} |\nabla(f \circ \varphi_{A}^{-1}(w))|^2~dudv\right)^{\frac{1}{2}} \\
=
C_{2,2}(\widetilde{\Omega})
\|J_{\varphi_{A}^{-1}} \mid L^{\infty}(\widetilde{\Omega})\|^{\frac{1}{2}}
\left(\iint\limits_\Omega \left\langle A(z) \nabla f(z), \nabla f(z) \right\rangle dxdy\right)^{\frac{1}{2}}
\end{multline*}
for any $f\in W_0^{1,2}(\Omega,A)$.
\end{proof}

\section{Lower estimates for $\lambda_1(A,\Omega)$}

We consider the generalized formulation of the Dirichlet eigenvalue problem~\eqref{EllDivOper}:
\[
\iint\limits_{\Omega}\langle A(z)\nabla f(z),\nabla \overline{g(z)}\rangle~dxdy
= \lambda
\iint\limits_{\Omega}f(z)\overline{g(z)}~dxdy,\,\, \forall g\in W^{1,2}_{0}(\Omega,A).
\]
By the min-max principle (see, for example, \cite{Henr,M}) the first Dirichlet eigenvalue $\lambda_1(A,\Omega)$ of the elliptic operator in divergence form $L_{A}f(z)$ is defined by
\[
\lambda_1(A,\Omega)=\inf_{f \in W_0^{1,2}(\Omega,A) \setminus \{0\}}
\frac{\|f\mid L^{1,2}_{A}(\Omega)\|^2}{\|f\mid L^2(\Omega)\|^2}\,.
\]
In other words, $\lambda_1^{-\frac{1}{2}}(A,\Omega)$ is the exact constant $C_{2,2}(A,\Omega)$ in the Sobolev-Poincar\'e inequality
\[
\|f\mid L^2(\Omega)\| \leq C_{2,2}(A,\Omega)\|f\mid L^{1,2}_{A}(\Omega)\|,\,\, f\in W^{1,2}_{0}(\Omega,A).
\]

\begin{theorem}
Let $A$ belong to the class  $M^{2 \times 2}(\Omega)$ and $\Omega$ be an $A$-quasiconformal $\beta$-regular domain about a domain $\widetilde{\Omega}$. Then
\[
\frac{1}{\lambda_1(A,\Omega)} \leq C^2_{\frac{2\beta}{\beta -1},2}(\widetilde{\Omega})\|J_{\varphi_{A}^{-1}}\mid L^{\beta}(\widetilde{\Omega})\|,
\]
where $J_{\varphi_{A}^{-1}}$ is  a Jacobian of the inverse mapping
to the $A$-quasiconformal mapping $\varphi_{A}:\Omega \to \widetilde{\Omega}$ and
\[
C_{\frac{2\beta}{\beta -1},2}(\widetilde{\Omega}) \leq \inf\limits_{p\in \left(\frac{2\beta}{2\beta -1},2\right)}
\left(\frac{p-1}{2-p}\right)
\frac{\left(\sqrt{\pi}\cdot\sqrt[p]{2}\right)^{-1}|\widetilde{\Omega}|^{\frac{\beta-1}{2\beta}}}{\sqrt{\Gamma(2/p) \Gamma(3-2/p)}}.
\]
\end{theorem}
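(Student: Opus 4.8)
The plan is to read off this inequality as a direct consequence of the variational characterization of $\lambda_1(A,\Omega)$ combined with Theorem~\ref{Th4.3}. Recall from the discussion opening this section that $\lambda_1^{-1/2}(A,\Omega)$ is precisely the sharp constant $C_{2,2}(A,\Omega)$ in the Sobolev--Poincar\'e inequality
\[
\|f\mid L^2(\Omega)\|\leq C_{2,2}(A,\Omega)\,\|f\mid L^{1,2}_A(\Omega)\|,\qquad f\in W^{1,2}_0(\Omega,A).
\]
Hence it suffices to estimate $C_{2,2}(A,\Omega)$ from above, and the whole analytic engine — the isometry of Theorem~\ref{Isometry}, the weighted Sobolev--Poincar\'e inequality of Theorem~\ref{Th3.1}, the regularity estimate of Lemma~\ref{Prop-reg}, and their assembly in Theorem~\ref{Th4.3} — is already in place.

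First I would apply Theorem~\ref{Th4.3}, part~(1), with the admissible choice $s=2$. Since $\tfrac{\beta s}{\beta-1}=\tfrac{2\beta}{\beta-1}$ when $s=2$, this yields
\[
C_{2,2}(A,\Omega)\leq C_{\frac{2\beta}{\beta-1},2}(\widetilde{\Omega})\,\big\|J_{\varphi_A^{-1}}\mid L^\beta(\widetilde{\Omega})\big\|^{1/2}.
\]
Squaring both sides and inserting $C_{2,2}^2(A,\Omega)=1/\lambda_1(A,\Omega)$ gives the first assertion
\[
\frac{1}{\lambda_1(A,\Omega)}\leq C^2_{\frac{2\beta}{\beta-1},2}(\widetilde{\Omega})\,\big\|J_{\varphi_A^{-1}}\mid L^\beta(\widetilde{\Omega})\big\|.
\]

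Second, for the explicit bound on $C_{\frac{2\beta}{\beta-1},2}(\widetilde{\Omega})$ I would simply specialize estimate~\eqref{Const} to $r=\frac{2\beta}{\beta-1}$. A one-line computation shows $\frac{2r}{r+2}=\frac{2\beta}{2\beta-1}$, so the infimum in \eqref{Const} runs over $p\in\left(\frac{2\beta}{2\beta-1},2\right)$, and $\frac1r=\frac{\beta-1}{2\beta}$, so the volume factor $|\widetilde{\Omega}|^{1/r}$ becomes $|\widetilde{\Omega}|^{\frac{\beta-1}{2\beta}}$; this is exactly the stated bound.

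There is no genuine obstacle here: the content is bookkeeping of the indices. The two points worth verifying are that $r=\frac{2\beta}{\beta-1}\ge 2$ for every $\beta\in(1,\infty)$, so that Theorem~\ref{Th3.1} and \eqref{Const} legitimately apply with this $r$, and that the interval $\left(\frac{2\beta}{2\beta-1},2\right)$ is non-empty, which is immediate since $\frac{2\beta}{2\beta-1}<2$ for all $\beta>1$. With these trivialities checked, the statement follows.
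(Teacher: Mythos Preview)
Your proposal is correct and follows essentially the same route as the paper: apply Theorem~\ref{Th4.3}(1) with $s=2$ to bound $C_{2,2}(A,\Omega)$, then square and identify $C_{2,2}^2(A,\Omega)=1/\lambda_1(A,\Omega)$ via the min-max/variational characterization. Your additional verification of the index arithmetic (that $r=\tfrac{2\beta}{\beta-1}\ge 2$, that $\tfrac{2r}{r+2}=\tfrac{2\beta}{2\beta-1}$, and that $\tfrac1r=\tfrac{\beta-1}{2\beta}$) and your explicit specialization of~\eqref{Const} are details the paper leaves implicit, but the argument is the same.
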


\begin{proof}
By the min-max principle and Theorem~\ref{Th4.3} in the case $s=2$, we have
\[
\iint\limits_{\Omega}|f(z)|^2~dxdy \leq C^2_{2,2}(A,\Omega)
\iint\limits_{\Omega} \langle A(z)\nabla f(z),\nabla f(z)\rangle~dxdy,
\]
where
\[
C_{2,2}(A,\Omega)\leq C_{\frac{2\beta}{\beta -1},2}(\widetilde{\Omega})
\left(\iint\limits_{\widetilde{\Omega}}|J(w,\varphi_{A}^{-1})|^{\beta}~dudv\right)^{\frac{1}{2\beta}}.
\]
Thus
\[
\frac{1}{\lambda_1(A,\Omega)} \leq C^2_{\frac{2\beta}{\beta -1},2}(\widetilde{\Omega})
\left(\iint\limits_{\widetilde{\Omega}}|J(w,\varphi_{A}^{-1})|^{\beta}~dudv\right)^{\frac{1}{\beta}}.
\]
\end{proof}

In the limit case $\beta=\infty$ we have the following assertion:
\begin{theorem}\label{Th-LE}
Let $A$ belong to a class  $M^{2 \times 2}(\Omega)$ and $\Omega$ be an $A$-quasiconformal $\infty$-regular domain about a domain $\widetilde{\Omega}$. Then
\begin{equation}\label{Estim}
\frac{1}{\lambda_1(A,\Omega)} \leq C^2_{2,2}(\widetilde{\Omega})\|J_{\varphi_{A}^{-1}}\mid L^{\infty}(\widetilde{\Omega})\|
=\frac{\|J_{\varphi_{A}^{-1}}\mid L^{\infty}(\widetilde{\Omega})\|}{\lambda_1(\widetilde{\Omega})},
\end{equation}
where $J_{\varphi_{A}^{-1}}$ is  a Jacobian of the inverse mapping
to the $A$-quasiconformal mapping $\varphi_{A}:\Omega \to \widetilde{\Omega}$.
\end{theorem}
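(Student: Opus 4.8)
The plan is to read this off as the $\beta=\infty$ endpoint of the previous theorem, combining part~(2) of Theorem~\ref{Th4.3} with the variational characterization of the first Dirichlet eigenvalue. First I would recall that, by the min-max principle, $\lambda_1^{-1/2}(A,\Omega)$ is precisely the sharp constant $C_{2,2}(A,\Omega)$ in
\[
\|f\mid L^2(\Omega)\| \leq C_{2,2}(A,\Omega)\,\|f\mid L^{1,2}_{A}(\Omega)\|,\qquad f\in W^{1,2}_{0}(\Omega,A),
\]
so that $1/\lambda_1(A,\Omega)=C_{2,2}^2(A,\Omega)$.

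Next, since $\Omega$ is $A$-quasiconformal $\infty$-regular about $\widetilde{\Omega}$, the Jacobian $J_{\varphi_A^{-1}}$ lies in $L^{\infty}(\widetilde{\Omega})$, so part~(2) of Theorem~\ref{Th4.3} applies and yields
\[
C_{2,2}(A,\Omega)\leq C_{2,2}(\widetilde{\Omega})\,\big\|J_{\varphi_{A}^{-1}}\mid L^{\infty}(\widetilde{\Omega})\big\|^{\frac{1}{2}}.
\]
Squaring this inequality and substituting the identity $C_{2,2}^2(\widetilde{\Omega})=1/\lambda_1(\widetilde{\Omega})$ from the Remark, we get
\[
\frac{1}{\lambda_1(A,\Omega)}=C_{2,2}^2(A,\Omega)\leq C_{2,2}^2(\widetilde{\Omega})\,\big\|J_{\varphi_{A}^{-1}}\mid L^{\infty}(\widetilde{\Omega})\big\|=\frac{\|J_{\varphi_{A}^{-1}}\mid L^{\infty}(\widetilde{\Omega})\|}{\lambda_1(\widetilde{\Omega})},
\]
which is exactly~\eqref{Estim}.

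I do not expect a genuine obstacle here: all the analytic work is already encapsulated in Theorem~\ref{Th4.3}(2), whose proof in turn rests on the Luzin $N$-property (giving $|J(z,\varphi_A)|^{-1}=|J(w,\varphi_A^{-1})|$ a.e.), the change-of-variables formula for quasiconformal maps, the non-weighted $L^2$ Sobolev--Poincar\'e inequality on $\widetilde{\Omega}$, and the isometry $\|\varphi_A^{*}(f)\mid L^{1,2}_A(\Omega)\|=\|f\mid L^{1,2}(\widetilde{\Omega})\|$ of Theorem~\ref{Isometry}. The one point I would be careful to state explicitly is that the $\infty$-regular case must be handled by this direct essential-supremum estimate rather than as a naive $\beta\to\infty$ limit of the finite-$\beta$ bound, so as to avoid any issue with uniformity in $\beta$ of the constant $C_{\frac{2\beta}{\beta-1},2}(\widetilde{\Omega})$.
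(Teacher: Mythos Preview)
Your proposal is correct and matches the paper's own treatment: the paper does not even write out a separate proof for this theorem, presenting it as the $\beta=\infty$ endpoint obtained from the min--max identity $1/\lambda_1(A,\Omega)=C_{2,2}^2(A,\Omega)$ together with part~(2) of Theorem~\ref{Th4.3} and the Remark $C_{2,2}^2(\widetilde{\Omega})=1/\lambda_1(\widetilde{\Omega})$, exactly as you do.
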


As an application of Theorem~\ref{Th-LE} we consider several examples.

\begin{example}
\label{example1}
The homeomorphism
\[
\varphi(z)= \frac{a}{a^2-b^2}z- \frac{b}{a^2-b^2} \overline{z}, \quad z=x+iy, \quad a>b\geq 0,
\]
is an $A$-quasiconformal and maps the right triangle with arbitrary angels
$$
\Omega= \left\{(x,y) \in \mathbb R^2: 0\leq x\leq a+b,\,\, 0\leq y\leq -\frac{a-b}{a+b}x+(a-b)\right\}
$$
onto the $45^{\circ}$ right triangle
$$
\widetilde{\Omega}= \left\{(u,v) \in \mathbb R^2: 0\leq u\leq 1,\,\, 0\leq v\leq 1-u\right\}.
$$

The mapping $\varphi$ satisfies the Beltrami equation with
\[
\mu(z)=\frac{\varphi_{\overline{z}}}{\varphi_{z}}=-\frac{b}{a}
\]
and the Jacobian $J(z,\varphi)=|\varphi_{z}|^2-|\varphi_{\overline{z}}|^2=1/(a^2-b^2)$.
It is easy to verify that $\mu$ induces, by formula \eqref{Matrix-F}, the matrix function $A(z)$ form
$$
A(z)=\begin{pmatrix} \frac{a+b}{a-b} & 0 \\ 0 &  \frac{a-b}{a+b} \end{pmatrix}.
$$
Given that $|J(w,\varphi^{-1})|=|J(z,\varphi)|^{-1}=a^2-b^2$ and $\lambda_1(\widetilde{\Omega})=5\pi^2$ (see, for example, \cite{GN13}). Then by Theorem~\ref{Th-LE} we have
$$
\lambda_1(A,\Omega) \geq
\frac{\lambda_1(\widetilde{\Omega})}{\|J_{\varphi^{-1}}\mid L^{\infty}(\widetilde{\Omega})\|} = \frac{5\pi^2}{a^2-b^2}.
$$
\end{example}

\begin{example}
The homeomorphism
\[
\varphi(z)= \frac{2 \cdot z^{\frac{3}{8}}}{\overline{z}^{\frac{1}{8}}}-1,\,\, \varphi(0)=-1, \quad z=x+iy,
\]
is an $A$-quasiconformal and maps the interior of the non-convex domain
$$
\Omega:=\left\{(\rho, \theta) \in \mathbb R^2:\rho=\cos^{4}\left(\frac{\theta}{2}\right), \quad - \pi \leq \theta \leq \pi\right\}
$$
onto the unit disc $\mathbb D$.
The mapping $\varphi$ satisfies the Beltrami equation with
\[
\mu(z)=\frac{\varphi_{\overline{z}}}{\varphi_{z}}=-\frac{1}{3}\frac{z}{\overline{z}}
\]
and the Jacobian
$$J(z,\varphi)=|\varphi_{z}|^2-|\varphi_{\overline{z}}|^2=\frac{1}{2\cdot |z|^{\frac{3}{2}}}.
$$
We see that $\mu$ induces, by formula \eqref{Matrix-F}, the matrix function $A(z)$ form
$$
A(z)=\begin{pmatrix} \frac{|3\overline{z}+z|^2}{8|\overline{z}|^2} & \frac{3}{4}\Imag \frac{z}{\overline{z}} \\ \frac{3}{4}\Imag \frac{z}{\overline{z}} & \frac{|3\overline{z}-z|^2}{8|\overline{z}|^2} \end{pmatrix}.
$$
Given that $|J(w,\varphi^{-1})|=|J(z,\varphi)|^{-1}=2\cdot |z|^{\frac{3}{2}}$ and
$\lambda_1(\mathbb D)=(j_{0,1})^2$. Then by Theorem~\ref{Th-LE} we have
$$
\lambda_1(A,\Omega) \geq
\frac{\lambda_1(\mathbb D)}{\|J_{\varphi^{-1}}\mid L^{\infty}(\mathbb D)\|} \geq \frac{(j_{0,1})^2}{2}.
$$
\end{example}

Taking into account Theorem~\ref{Th-LE} and the domain monotonicity property of the Dirichlet eigenvalues for the elliptic operator in divergence form $L_{A}f(z)$, we obtain the
following result for the special case $\|J_{\varphi_{A}^{-1}}\mid L^{\infty}(\widetilde{\Omega})\|<1$.
\begin{proposition}\label{main}
Let $\Omega$ be an $A$-quasiconformal $\infty$-regular domain about $\widetilde{\Omega}$. We assume that $\widetilde{\Omega} \supset \Omega$, then
\[
\lambda_1(A,\Omega)-\lambda_1(\widetilde{\Omega}) \geq \frac{1-\|J_{\varphi_{A}^{-1}}\mid L^{\infty}(\widetilde{\Omega})\|}{ \|J_{\varphi_{A}^{-1}}\mid L^{\infty}(\widetilde{\Omega})\|}\lambda_1(\widetilde{\Omega}).
\]

\end{proposition}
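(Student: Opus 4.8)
The plan is to read the estimate off directly from Theorem~\ref{Th-LE}, the lower bound for $\lambda_1(A,\Omega)$ in $\infty$-regular domains, by a one-line rearrangement; the hypothesis $\widetilde\Omega\supset\Omega$ (together with the regime $\|J_{\varphi_A^{-1}}\mid L^\infty(\widetilde\Omega)\|<1$) and the domain monotonicity of the Dirichlet eigenvalues serve only to make the resulting bound a genuine, nonnegative spectral gap.

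Concretely, I would first apply Theorem~\ref{Th-LE}: since $\Omega$ is $A$-quasiconformal $\infty$-regular about $\widetilde\Omega$, estimate~\eqref{Estim} is equivalent to
\[
\lambda_1(A,\Omega)\;\ge\;\frac{\lambda_1(\widetilde\Omega)}{\|J_{\varphi_A^{-1}}\mid L^\infty(\widetilde\Omega)\|}.
\]
Both quantities on the right are finite and strictly positive --- $\lambda_1(\widetilde\Omega)>0$ because $\widetilde\Omega$ is a bounded domain, and $0<\|J_{\varphi_A^{-1}}\mid L^\infty(\widetilde\Omega)\|<\infty$ because $\varphi_A^{-1}$ is a quasiconformal homeomorphism of the bounded domain $\widetilde\Omega$ --- so I may subtract $\lambda_1(\widetilde\Omega)$ and factor it out:
\[
\lambda_1(A,\Omega)-\lambda_1(\widetilde\Omega)\;\ge\;\lambda_1(\widetilde\Omega)\left(\frac{1}{\|J_{\varphi_A^{-1}}\mid L^\infty(\widetilde\Omega)\|}-1\right)\;=\;\frac{1-\|J_{\varphi_A^{-1}}\mid L^\infty(\widetilde\Omega)\|}{\|J_{\varphi_A^{-1}}\mid L^\infty(\widetilde\Omega)\|}\,\lambda_1(\widetilde\Omega),
\]
which is exactly the assertion. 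The inclusion $\widetilde\Omega\supset\Omega$ is then used to interpret the left-hand side as the variation of the principal frequency under the $A$-quasiconformal enlargement $\Omega\to\widetilde\Omega$: by domain monotonicity one additionally has $\lambda_1(\widetilde\Omega)\le\lambda_1(\Omega)$, so the estimate describes how far the lowest tone of the membrane can rise when the homogeneous membrane on the larger domain $\widetilde\Omega$ is replaced by the non-homogeneous one on $\Omega$.

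I do not anticipate any real obstacle: all of the analytic content --- the quasiconformal change of variables, the isometry of Theorem~\ref{Isometry}, the integrability of the Jacobian, and the sharp Laplacian comparison constant $C_{2,2}^2(\widetilde\Omega)=1/\lambda_1(\widetilde\Omega)$ --- has already been absorbed into Theorem~\ref{Th-LE}, and what remains is purely arithmetic. The only thing worth double-checking is the positivity and finiteness of the two factors above, so that the rearrangement (in particular the division by $\|J_{\varphi_A^{-1}}\mid L^\infty(\widetilde\Omega)\|$) is legitimate; this is immediate from the boundedness of $\widetilde\Omega$ and the quasiconformality of $\varphi_A$.
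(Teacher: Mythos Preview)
Your argument is correct and matches the paper's own proof: invoke Theorem~\ref{Th-LE} to obtain $\lambda_1(A,\Omega)\ge \lambda_1(\widetilde\Omega)/\|J_{\varphi_A^{-1}}\mid L^\infty(\widetilde\Omega)\|$, then subtract $\lambda_1(\widetilde\Omega)$ and factor. The paper likewise notes the inclusion $\widetilde\Omega\supset\Omega$ only to record that the left-hand side is nonnegative via domain monotonicity; neither your version nor theirs needs this for the algebraic inequality itself.
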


\begin{proof}
Since $\widetilde{\Omega} \supset \Omega$, we have $\lambda_1(A,\Omega)\geq\lambda_1(\widetilde{\Omega})$.
Taking into account the inequality~\eqref{Estim} in Theorem~\ref{Th-LE} and making elementary calculation, we get
\[
\lambda_1(A,\Omega)-\lambda_1(\widetilde{\Omega}) \geq \frac{1-\|J_{\varphi_{A}^{-1}}\mid L^{\infty}(\widetilde{\Omega})\|}{ \|J_{\varphi_{A}^{-1}}\mid L^{\infty}(\widetilde{\Omega})\|}\lambda_1(\widetilde{\Omega}).
\]
\end{proof}

\subsection{The Rayleigh-Faber-Krahn type inequality}
The theory of composition operators \cite{GPU2020} allows us to reduce the spectral problem for the divergence form elliptic operator \eqref{EllDivOper} defined in simply connected bounded domain $\Omega \subset \mathbb C$ to a weighted spectral problem for
the Laplace operator in a simply connected bounded domain $\widetilde{\Omega}\subset \mathbb C$.
By the chain rule applied to a function $f(z)=g \circ \varphi_{A}(z)$ \cite{GNR18}, we have
\[
-\textrm{div} [A(z) \nabla f(z)]=-\textrm{div} [A(z) \nabla g \circ \varphi_{A}(z)]=-h(w)\Delta g(w),
\]
where the weight $h(w)=|J(w,\varphi_A^{-1})|^{-1}$ is  a Jacobian of the inverse mapping
to the $A$-quasi\-conformal mapping $\varphi_{A}:\Omega \to \widetilde{\Omega}$.

From here we can point out that
\begin{multline*}
\lambda_1(A, \Omega)= \inf_{f \in W_0^{1,2}(\Omega,A) \setminus \{0\}}
\frac{\iint\limits_{\Omega} \left\langle A(z) \nabla f, \nabla f\right\rangle dxdy}{\iint\limits_{\Omega} |f|^2dxdy} \\
= \inf_{g\in W_0^{1,2}(\widetilde{\Omega},h,1) \setminus \{0\}}
\frac{\iint\limits_{\widetilde{\Omega}} |\nabla g|^2dudv}{\iint\limits_{\widetilde{\Omega}} |g|^2h(w)dudv}=\lambda_1(h,\widetilde{\Omega}).
\end{multline*}

Let $\varphi_{A}:\Omega \to \widetilde{\Omega}$ be $A$-quasiconformal mappings for which $|J(z,\varphi_{A})|=1$ for almost all $z \in \Omega$. In this case quasiconformal weights $h(w)=|J(w, \varphi_{A}^{-1})|=1$ for almost all $w \in \widetilde{\Omega}$.
Hence we get that $\lambda_1(A, \Omega)=\lambda_1(\widetilde{\Omega})$.

Using this equality and the classical Rayleigh-Faber-Krahn inequality we immediately obtain  a version of this inequality for elliptic operators in divergence form.
Namely:
\begin{theorem}
Let $\Omega \subset \mathbb C$ be a simply connected  bounded domain such that there exists a measure preserving A-quasiconformal mapping $\varphi_{A}:\Omega \to \widetilde{\Omega}$. Then
\[
\lambda_1(A, \Omega) \geq \lambda_1(\Omega^{\ast})=\frac{{j_{0,1}^2}}{R^2_{\ast}},
\]
Here $\Omega^{\ast}$ is the disc of the radius $R_{\ast}$ such that $|\Omega|=|\Omega^{\ast}|$. and $j_{0,1} \approx 2.4048$ is the first positive zero of the Bessel function $J_0$.
\end{theorem}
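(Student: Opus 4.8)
The strategy is to chain together three facts already established in the excerpt: (i) the reduction of the eigenvalue problem for $L_A$ on $\Omega$ to a weighted Laplacian eigenvalue problem on $\widetilde{\Omega}$, (ii) the hypothesis that the $A$-quasiconformal mapping $\varphi_A:\Omega\to\widetilde{\Omega}$ is measure preserving, and (iii) the classical Rayleigh--Faber--Krahn inequality. I would begin by invoking the chain-rule computation carried out just before the statement, which shows that for $f=g\circ\varphi_A$ one has $-\operatorname{div}[A(z)\nabla f(z)]=-h(w)\Delta g(w)$ with $h(w)=|J(w,\varphi_A^{-1})|^{-1}$, and hence via the variational (min-max) characterization
\[
\lambda_1(A,\Omega)=\inf_{g\in W_0^{1,2}(\widetilde{\Omega},h,1)\setminus\{0\}}
\frac{\iint_{\widetilde{\Omega}}|\nabla g|^2\,dudv}{\iint_{\widetilde{\Omega}}|g|^2 h(w)\,dudv}=\lambda_1(h,\widetilde{\Omega}).
\]

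Next I would use the hypothesis that $\varphi_A$ is measure preserving, i.e.\ $|J(z,\varphi_A)|=1$ a.e.\ in $\Omega$. By the Luzin $N$-property of quasiconformal mappings (recalled in Section 2), this gives $|J(w,\varphi_A^{-1})|=|J(z,\varphi_A)|^{-1}=1$ for almost every $w=\varphi_A(z)\in\widetilde{\Omega}$, so the weight satisfies $h\equiv 1$ a.e. Substituting $h=1$ into the Rayleigh quotient above collapses the weighted problem to the ordinary Dirichlet problem for the Laplacian, yielding the identity $\lambda_1(A,\Omega)=\lambda_1(\widetilde{\Omega})$. I would also note that since $\varphi_A$ is measure preserving and quasiconformal (hence satisfies the Luzin $N$-property in both directions), $|\widetilde{\Omega}|=|\Omega|=|\Omega^{\ast}|$, so $\widetilde{\Omega}$ and $\Omega^{\ast}$ have the same area.

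Finally I would apply the classical Rayleigh--Faber--Krahn inequality to $\widetilde{\Omega}$: since $|\widetilde{\Omega}|=|\Omega^{\ast}|$, we have $\lambda_1(\widetilde{\Omega})\geq\lambda_1(\Omega^{\ast})=j_{0,1}^2/R_{\ast}^2$. Combining this with the identity from the previous step gives $\lambda_1(A,\Omega)=\lambda_1(\widetilde{\Omega})\geq j_{0,1}^2/R_{\ast}^2$, which is the assertion.

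**Expected main obstacle.** The argument is essentially a synthesis of results already in place, so there is little genuine difficulty; the only point requiring care is the passage $|J(z,\varphi_A)|=1\Rightarrow h=|J(w,\varphi_A^{-1})|=1$ a.e., which relies on the Luzin $N$-property and the a.e.\ differentiability of both $\varphi_A$ and $\varphi_A^{-1}$ (the inverse being $A^{-1}$-quasiconformal). One should also make sure the spaces $W_0^{1,2}(\Omega,A)$ and $W_0^{1,2}(\widetilde{\Omega},h,1)$ correspond bijectively under $f\mapsto f\circ\varphi_A^{-1}$, which is exactly the isometry content of Theorem~\ref{Isometry} together with the change-of-variables formula; with $h=1$ this is just the standard fact that composition with a quasiconformal map preserving measure gives an isometry of $L^2$ as well.
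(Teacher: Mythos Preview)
Your proposal is correct and follows essentially the same route as the paper: reduce $\lambda_1(A,\Omega)$ to the weighted Dirichlet--Laplace eigenvalue $\lambda_1(h,\widetilde{\Omega})$ via the $A$-quasiconformal change of variables, use the measure-preserving hypothesis to force $h\equiv 1$ and hence $\lambda_1(A,\Omega)=\lambda_1(\widetilde{\Omega})$, and then apply the classical Rayleigh--Faber--Krahn inequality. Your observation that $|\widetilde{\Omega}|=|\Omega|=|\Omega^{\ast}|$ is a detail the paper leaves implicit but which is indeed needed to invoke Rayleigh--Faber--Krahn with the disc $\Omega^{\ast}$.
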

\begin{proof}
By assumptions, the mapping $\varphi_{A}:\Omega \to \widetilde{\Omega}$ is measure preserving A-quasiconformal, i.e., 
$|J(z,\varphi_{A})|=1$ almost everywhere in $\Omega$. From the equality $|J(z,\varphi_{A})|=|J(z,\varphi^{-1}_{A})|=1$
 for almost all $z \in \Omega$ and almost all $w=\varphi_{A}(z) \in \widetilde{\Omega}$ we obtain the equality
 $\lambda_1(A, \Omega)=\lambda_1(\widetilde{\Omega})$. Now using the classical Rayleigh-Faber-Krahn inequality,
 \[
 \lambda_1(\widetilde{\Omega}) \geq \lambda_1(\Omega^{\ast})
 \]
 we obtain
 \[
\lambda_1(A, \Omega) \geq \lambda_1(\Omega^{\ast}).
\]
The theorem is proved.
\end{proof}

A description of the class of measure preserving $A$-quasiconformal mappings and/or corresponding divergence form elliptic equations is an open problem. Let us give simple examples of such mappings.

\begin{example}
Let $\varphi(z)=(ax,\frac{1}{a}y)$, $z=x+iy$ and $a>1$. Then $J(z,\varphi)=1$, the quasiconformality coefficient $Q_{\varphi}=a^2$ and
$\mu_{\varphi}=\frac{a^2-1}{a^2+1}$. The matrix can be easily recontracted
\[
A(z)= \begin{pmatrix} \frac{1}{a^2} & 0 \\ 0 &  a^2 \end{pmatrix}.
\]
\end{example}

A little bit more complicated example.
\begin{example}
Let $\varphi(z)=(ax+by, \frac{1}{a}y)$, $z=x+iy$ and $a>1$. Then $J(z,\varphi)=1$, the quasiconformality coefficient $Q_{\varphi}=a^2$.
Calculation of $\mu_{\varphi}$ is more complicated. We use the Beltrami equation, i.e
$$
\varphi_z=\frac{1}{2}(a+\frac{1}{a})-i\frac{b}{2}, \quad
\varphi_{\overline{z}}=\frac{1}{2}(a-\frac{1}{a})+i\frac{b}{2}.
$$
Hence
$$
\mu_{\varphi}=\frac{(a^2-1)(a^2+1)-a^2b^2}{(a^2+1)^2+a^2b^2}+i\frac{2a^3b}{(a^2+1)^2+a^2b^2}.
$$
The matrix $A$ can be easily reconstracted by elementary calculations.
\end{example}

\begin{example}
Let $f \in L^{1}_{\infty}(\mathbb R)$. Then $\varphi(z)=(x+f(y),y)$, $z=x+iy$, is a quasiconformal mapping with $|J(z,\varphi)|=1$ (see, \cite{GPU2020}).
A basic calculation implies
$$
\varphi_z=1-i\frac{f'(y)}{2}, \quad \varphi_{\overline{z}}=i\frac{f'(y)}{2}.
$$
Hence
$$
\mu_{\varphi}=-\frac{(f'(y))^2}{4+(f'(y))^2}+i\frac{2f'(y)}{4+(f'(y))^2}.
$$
 The matrix can be easily recontracted
\[
A(z)= \begin{pmatrix} 1 & -f'(y) \\ -f'(y) &  1+(f'(y))^2 \end{pmatrix}.
\]
\end{example}

This algorithm can be used for more complicated examples.

\vskip 0.3cm

\section{Appendix}

Few remarks about measure preserving and quasi-preserving (bi-Lipschitz) quasiconformal mappings and$A$-quasiconformal mappings.

A quasiconformal mapping $\varphi$ is a solution of the corresponding Beltrami equation
\begin{equation} \label{MP1}
\varphi_{\overline{z}}(z)=\mu(z) \varphi_{z}(z).
\end{equation}
Because $J(z, \varphi)=|\varphi_{z}^2|(z)-|\varphi_{\overline{z}}^2|(z)$, the condition $J(z,\varphi)=1$ can be written as
\begin{equation}\label{MP2}
|\varphi_{z}|^2(z)(1-|\mu(z)|^2)=1.
\end{equation}

Recall that for $A$-quasiconformal mappings we have
$$
\mu(z)=\frac{a_{22}(z)-a_{11}(z)-2ia_{12}(z)}{\det(I+A(z))}.
$$
By elementary calculations it is easy to verify that any measure preserving quasiconformal mapping $\varphi:\Omega \to \widetilde{\Omega}$ is a bi-Lipschitz mapping in the following sense: $D(\varphi) \in L^1_{\infty}(\Omega)$ and
$D(\varphi^{-1}) \in L^1_{\infty}(\widetilde{\Omega})$ (or $D\varphi \in L^{\infty}(\Omega)$ and
$D\varphi^{-1} \in L^{\infty}(\widetilde{\Omega})$). An equivalent geometric description is the following. The mapping $\varphi$ and its inverse are locally Lipschitz homeomorphisms with uniformly bounded Lipschitz constants.

The condition \eqref{MP2} can be soften in the spirit of quasiconformality up to the inequality
$$\
\frac{1}{C}<J(z,\varphi)<C
$$
for some positive constant $C$. We call such homeomorphism as quasi-preserving measure homeomorphisms.

For quasiconformal quasi-preserving measure homeomorphisms this condition can be written as
\begin{equation}\label{MP3}
\frac{1}{C}<|\varphi_{z}^2|(z)(1-|\mu(z)|^2)<C.
\end{equation}

The class of quasiconformal quasi-preserving measure homeomorphisms coincide with the class of bi-Lipschitz homeomorphisms. The constant $C$ can be easily recalculated in terms of uniform Lipschitz constants for the corresponding homeomorphism and its inverse.

We do not have any geometric description of such homeomorphisms i.e to solution of systems (\eqref{MP1},\eqref{MP2})
or (\eqref{MP1},\eqref{MP3}). Let us look for some simplified cases.

Suppose the matrix $A$ is a diagonal matrix. Then the quasiconformality coefficient $\mu(z)$ is a real number.
If $a_{11}(z)>0$ then $0<\mu(z)<1$ and the condition \eqref{MP2} can be simplified:
\begin{equation} \label{MP2s}
|\varphi_{z}|^2(z)(1-\mu(z)^2)=1.
\end{equation}
By simple calculations
$$
\mu(z)=\frac{a_{11}(z)+a_{11}^{-1}(z)}{2+a_{11}(z)+a_{11}^{-1}(z)}.
$$

Let us give an example of such homeomorphisms.
\begin{example}
Suppose $\Omega:=(0,1)\times(0,1)$ and $\varphi(x,y)=a(x)+ib(y)$ where $a(x)$ and $b(y)$ belong to $C^1(\Omega)$.
We also suppose that
$$
\inf\limits_{x\in(0,1)}(da/dx)>\sup\limits_{y\in(0,1)}(db/dy)
$$
and $I_1:=\inf\limits_{y\in(0,1)}(db/dy)>0$.

Any such mapping is a quasi-preserving measure one, because
$$
I_1 \leq J(z,\varphi)\leq \left(\left|da/dx\right|_{C^1(\Omega)}\times \left|db/dy\right|_{C^1(\Omega)} \right)
$$
and quasiconformal one because
$$
Q\leq \frac{\sup\limits_{x\in(0,1)}(da/dx)}{\inf\limits_{y\in(0,1)}(db/dy)}.
$$

The corresponding matrix $A$ can be easily reconstructed
\begin{equation}
A(z)= \begin{pmatrix} \frac{1-\mu(z)}{1+\mu(z)} & 0 \\ 0 &  \frac{1+\mu(z)}{1-\mu(z)} \end{pmatrix},\,\,\, \text{a.e. in}\,\,\, \Omega,
\end{equation}
where for $z=x+iy$
$$
\mu(z)=\frac{da/dx(x) -db/dy(y)}{da/dx(x) +db/dy(y)}\,.
$$

\end{example}

\textbf{Acknowledgements.}

 The second author was supported by the Ministry of Science and Higher Education of Russia (agreement No. 075-02-2022-884).

\vskip 0.3cm

\vskip 0.3cm

Department of Mathematics, Ben-Gurion University of the Negev, P.O.Box 653, Beer Sheva, 8410501, Israel

\emph{E-mail address:} \email{vladimir@math.bgu.ac.il} \\

 Regional Scientific and Educational Mathematical Center, Tomsk State University, 634050 Tomsk, Lenin Ave. 36, Russia
							
 \emph{E-mail address:} \email{vpchelintsev@vtomske.ru}   \\

\end{document}